\theoremstyle{plain} 
\theoremstyle{definition} 
\newtheorem{thm}{Theorem}[section]
\newtheorem{lem}[thm]{Lemma}
\theoremstyle{definition}
\newtheorem{defn}{Definition}[section]
\theoremstyle{remark}
\newtheorem{rem}{Remark}[section]
\newcommand{\be}{\begin{equation}}
\newcommand{\ee}{\end{equation}}
\newcommand{\bea}{\begin{eqnarray}}
\newcommand{\eea}{\end{eqnarray}}
\newcommand{\ben}{\begin{eqnarray*}}
	\newcommand{\een}{\end{eqnarray*}}
\newcommand{\bt}{\begin{split}}
	\newcommand{\et}{\end{split}}
\newcommand{\bet}{\begin{equation}}
\newcommand{\mc}{\mathbb{C}}
\newcommand{\mr}{\mathbb{R}}
\newcommand{\ra}{\rightarrow}
\newcommand{\beq}{\begin{equation*}}
\newcommand{\eeq}{\end{equation*}}
\begin{document}

\title[Curvature positivity of invariant direct images ]
{Curvature positivity of invariant direct images of Hermitian vector bundles}

\author[F. Deng]{Fusheng Deng}
\address{Fusheng Deng: \ School of Mathematical Sciences, University of Chinese Academy of Sciences\\ Beijing 100049, P. R. China}
\email{fshdeng@ucas.ac.cn}

\author[J. Hu]{Jinjin Hu}
\address{Jinjin Hu: \ School of Mathematical Sciences, University of Chinese Academy of Sciences\\ Beijing 100049, P. R. China}
\email{hujinjin18@mails.ucas.ac.cn}

\author[W. Jiang]{Weiwen Jiang}
\address{Weiwen Jiang: \ School of Mathematical Sciences, University of Chinese Academy of Sciences\\ Beijing 100049, P. R. China}
\email{jiangweiwen17@mails.ucas.edu.cn}

\begin{abstract}
We prove that the invariant part, with respect to a compact group action satisfying certain condition,
of the direct image of a Nakano positive Hermitian holomorphic vector bundle over a bounded pseudoconvex domain
is Nakano positive. We also consider the action of the noncompact group $\mr^m$ and get the same result for a family of tube domains,
which leads to a new method to the
matrix-valued Prekopa's theorem originally proved by Raufi.
The two main ingredients in our method are H\"ormander's $L^2$ theory of $\bar\partial$ and the recent work of Deng-Ning-Zhang-Zhou on characterization of Nakano positivity
of Hermitian holomorphic vector bundles.
\end{abstract}

\maketitle

\section{Introduction}\label{sec:intro}
Let $p:\mc^n_t\times\mc^m_z\ra\mc^n$ be the natural projection.
For a domain $\Omega\subset\mc^n\times\mc^m$, we denote the fiber $p^{-1}(t)\cap\Omega$ of $\Omega$ over $t$ by $\Omega_t$ for $t\in p(\Omega)\subset\mc^n_t$.
In 1998, Berndtsson proved the following remarkable result which stimulates a series of important works on positivity of direct image sheaves of positively
curved Hermitian holomorphic vector bundles.

\begin{thm}[\cite{Ber98}]\label{thm:Berndtsson minimum}
Let $\varphi(t,z)$ be a plurisubharmonic function on a pseudoconvex domain $\Omega\subset \ \mathbb{C}_{t}^{n}\ \times \ \mathbb{C}_{z}^{m}$.
\begin{itemize}
\item[(1)] If all fibers $\Omega_t\ (t\in p(\Omega))$ are (connected) Reinhardt domains, and $\varphi(t,z)$ is independent of  $Arg(z_{j}),\ j=1,\cdots,m$,
then the function $\tilde{\varphi}$ defined by
$$e^{-\tilde{\varphi}(t)}\ =\ \int_{\Omega_t} e^{-\varphi(t,z)}dV_z$$
is a p.s.h function on $p(\Omega)$, where $dV_z$ is the Lebsgue measure on $\mathbb C^m$.
Moreover, if all $\Omega_t$ contain the origin, we only assume $\Omega_t$ and $\varphi$ are invariant under the
transform $z\ \longmapsto\ e^{i\theta}z,$ $\forall \theta \in \mathbb{R}$, then the same result still holds.
\item[(2)] If all fibers $\Omega_t$ are tube domains,
 $$\Omega_t =X_t + i\mathbb{R}^{m}$$
 and $\varphi$ is independent of  $Im(z_{j}),\ j=1,\cdots,m$,
 then the function $\tilde{\varphi}$ defined by
 $$e^{-\tilde{\varphi}(t)}\ =\ \int_{X_t} e^{-\varphi(t,Re(z))}dV_{Re(z)}$$
 is a p.s.h function on $p(\Omega)$.
\end{itemize}
\end{thm}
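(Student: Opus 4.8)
\smallskip
\noindent\textbf{Proof strategy.} The plan is to obtain Theorem~\ref{thm:Berndtsson minimum} by combining H\"ormander's $L^2$-estimate for $\bar\partial$ on $\Omega$ with the Deng--Ning--Zhang--Zhou characterization of Nakano positivity. Plurisubharmonicity being local on the base, I may fix a ball $B\Subset p(\Omega)$ with $\Omega_B:=\Omega\cap(B\times\mathbb C^m)$ pseudoconvex, regard $e^{-\tilde\varphi}$ as a metric on the trivial line bundle over $B$, and show it is (Nakano, equivalently curvature-) semipositive, i.e.\ that $\tilde\varphi$ is p.s.h.\ on $B$. By the DNZZ characterization it suffices to verify the optimal $L^2$-estimate: for every smooth strictly p.s.h.\ $\psi$ on $B$ and every smooth $\bar\partial$-closed $(n,1)$-form $f$ on $B$ with $\int_B|f|^2_{i\ddbar\psi}\,e^{-\tilde\varphi-\psi}<\infty$, there is $v$ with $\bar\partial v=f$ and $\int_B|v|^2e^{-\tilde\varphi-\psi}\le\int_B|f|^2_{i\ddbar\psi}\,e^{-\tilde\varphi-\psi}$.

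To produce such a $v$, I pull everything back to $\Omega_B$ along $\pi:=p|_{\Omega_B}$: put $\Phi:=\varphi+\pi^*\psi$, which is p.s.h.\ as a sum of p.s.h.\ functions, and $F:=\pi^*f\wedge dz_1\wedge\cdots\wedge dz_m$, a $\bar\partial$-closed $(n+m,1)$-form on the pseudoconvex domain $\Omega_B$. H\"ormander's estimate then yields an $(n+m,0)$-form $U=u\,dt_1\wedge\cdots\wedge dt_n\wedge dz_1\wedge\cdots\wedge dz_m$ with $\bar\partial U=F$ and $\int_{\Omega_B}|u|^2e^{-\Phi}\le\int_{\Omega_B}|F|^2_{i\ddbar\Phi}\,e^{-\Phi}$. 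Since $F$ involves only the $d\bar t_j$'s while $\pi^*\psi$ contributes $i\ddbar\psi$ to the $t\bar t$-block of $i\ddbar\Phi$, a Schur-complement computation using $i\ddbar\varphi\ge0$ gives the pointwise inequality $|F|^2_{i\ddbar\Phi}\le|f|^2_{i\ddbar\psi}\circ\pi$ (the factor $|dz_1\wedge\cdots\wedge dz_m|^2=1$ costing nothing); this comparison, together with assigning the right meaning to $|F|^2_{i\ddbar\Phi}$ when $i\ddbar\Phi$ degenerates in the $z$-directions, is the one genuinely delicate point, and is precisely what H\"ormander's estimate in the form used by DNZZ is built to accommodate. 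Combined with Fubini this reads $\int_{\Omega_B}|u|^2e^{-\Phi}\le\int_B|f|^2_{i\ddbar\psi}\,e^{-\psi}\bigl(\int_{\Omega_t}e^{-\varphi}\,dV_z\bigr)dt=\int_B|f|^2_{i\ddbar\psi}\,e^{-\tilde\varphi-\psi}$.

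It remains to descend from $u$ to a function on the base. Matching components of $\bar\partial U=F$ forces $\partial u/\partial\bar z_k\equiv0$, so each $u(t,\cdot)$ is holomorphic on $\Omega_t$ and $\bar\partial_t u=f$. Let $v(t)$ be the average of $u(t,\cdot)$ over the compact group ($T^m$ in the first situation of part~(1), $S^1$ in the second) acting on the $z$-variables: since $u(t,\cdot)$ is holomorphic and $\Omega_t$ is connected and group-invariant (with $0\in\Omega_t$ in the $S^1$-case), this average is a group-invariant holomorphic function, hence constant in $z$, so $v$ is a well-defined function on $B$ with $\bar\partial v=f$. Invariance of $\varphi$ makes $v(t)$ exactly the orthogonal projection of $u(t,\cdot)$ onto the invariants (the constants) in $L^2(\Omega_t,e^{-\varphi})$, whence $\int_{\Omega_t}|u(t,z)|^2e^{-\varphi}\,dV_z\ge|v(t)|^2e^{-\tilde\varphi(t)}$. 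Multiplying by $e^{-\psi(t)}$, integrating over $B$, and using the two displays above turns the H\"ormander bound into $\int_B|v|^2e^{-\tilde\varphi-\psi}\le\int_B|f|^2_{i\ddbar\psi}\,e^{-\tilde\varphi-\psi}$, the optimal estimate; DNZZ then gives that $\tilde\varphi$ is p.s.h., proving part~(1).

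I would deduce part~(2) from part~(1) by the substitution $w_j=e^{z_j}$. The fibers $\Omega_t=X_t+i\mathbb R^m$ are carried by $z\mapsto(e^{z_1},\dots,e^{z_m})$ onto the Reinhardt domains $\Omega^R_t:=\{w:(\log|w_1|,\dots,\log|w_m|)\in X_t\}$, the associated domain $\Omega^R:=\{(t,w):(\log|w_j|)\in X_t\}$ is pseudoconvex by the standard parametrized form of Bochner's tube theorem, and $\psi^R(t,w):=\varphi(t,\log|w|)+\sum_j\log|w_j|^2$ is torus-invariant and p.s.h., being the sum of the pluriharmonic term $\sum_j\log|w_j|^2$ and the composition of the p.s.h.\ function $\varphi$ (which depends on its $z$-argument only through $\operatorname{Re}z$) with the holomorphic map $(t,w)\mapsto(t,\log w)$. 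Part~(1) applied to $(\Omega^R,\psi^R)$ shows that $t\mapsto-\log\int_{\Omega^R_t}e^{-\psi^R}\,dV_w$ is p.s.h., and passing to polar coordinates in $w$ turns $\int_{\Omega^R_t}e^{-\psi^R}\,dV_w$ into $(2\pi)^m\int_{X_t}e^{-\varphi(t,x)}\,dV_x=(2\pi)^m e^{-\tilde\varphi(t)}$, which is exactly the assertion of part~(2). The main obstacle throughout is the pointwise curvature comparison on the total space and the correct handling of H\"ormander's estimate when $i\ddbar\Phi$ is only semidefinite; the remainder is bookkeeping with the isotypic decomposition and Fubini.
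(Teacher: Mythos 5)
Your proposal is essentially the method of this paper rather than a different route: the paper quotes Theorem \ref{thm:Berndtsson minimum} from \cite{Ber98} without reproving it, but the proofs it gives of the generalizations (Theorem \ref{thm:minimum compact action} and Theorem \ref{thm:minimum noncompact action}) are exactly your argument run for metrics of arbitrary rank --- pull back the test form and the strictly p.s.h.\ weight, solve $\bar\partial$ on the total space by H\"ormander with the degenerate-weight comparison $\langle[i\partial\bar\partial\tilde\psi,\Lambda]^{-1}\tilde f,\tilde f\rangle$, descend to the base, apply Fubini, and invoke the characterization of Lemma \ref{lem:Nakano cha}; your deduction of part (2) from part (1) via $w_j=e^{z_j}$, with the Jacobian absorbed into the weight as a pluriharmonic term, is literally the reduction in Section \ref{sec:tube domain case}. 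The one genuine local difference is the descent step: you average the H\"ormander solution $u(t,\cdot)$ over the compact group and use that averaging is a contractive projection onto the constants in $L^2(\Omega_t,e^{-\varphi}dV_z)$, whereas the paper takes the minimal solution and argues from uniqueness that it is already invariant, hence fiberwise constant. Your variant is a bit cleaner in the scalar case (no uniqueness argument needed), though both rest on the same two facts: invariance of $e^{-\varphi}dV_z$ and triviality of invariant holomorphic functions on connected fibers. Three loose ends to tighten. First, $\tilde\varphi$ need not be $C^2$, so the characterization you invoke should be the scalar one of \cite{DNW19} (which carries no smoothness hypothesis), not the bundle version used with a $C^2$ assumption elsewhere in the paper; otherwise insert a regularization step. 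Second, pseudoconvexity of the Reinhardt image $\Omega^R$ is not given by Bochner's tube theorem; the correct point is that $-\log d_\Omega$ is plurisubharmonic and invariant under imaginary translations in $z$, hence descends through the covering $(t,z)\mapsto(t,e^z)$ and, together with $-\log|w_j|$, yields a p.s.h.\ exhaustion. Third, the inequality $|v(t)|^2e^{-\tilde\varphi(t)}\le\int_{\Omega_t}|u|^2e^{-\varphi}\,dV_z$ presupposes $e^{-\tilde\varphi(t)}<\infty$ so that constants lie in $L^2(\Omega_t,e^{-\varphi})$; the degenerate case where the fiber integral diverges should be dispatched separately (there $\tilde\varphi\equiv-\infty$ on a set of positive measure).
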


The above result is motivated by and generalizes Kiselmans's minimum principle for plurisubharmonic functions \cite{Kis78} and Prekopa's theorem for convex functions \cite{Pre73}.
Kiselmans's minimum principle states that, under the condition of (2) in Theorem \ref{thm:Berndtsson minimum},
the function
$$\varphi^*(t)=\inf_{z\in\Omega_t}\varphi(t,z)$$
is a plurisubharmonic function on $p(\Omega)$,
and Prekopa's theorem states that for a convex function $\phi(x,y)$ on $\mr^n_x\times\mr^m_y$, the function $\tilde\phi(x)$ defined by
$$e^{-\tilde\phi(x)}=\int_{\mr^m}e^{-\phi(x,y)}dV_y$$
is a convex function on $\mr^n_x$.

In \cite{Ber06,Ber09,DZZ14,DZZ17}, Theorem \ref{thm:Berndtsson minimum} was generalized along different directions.

Motivated by Raufi's work on matrix valued Prekopa's theorem \cite{Rau13}
and the recent work of the first author and collaborators on the characterization of Nakano positivity
of Hermitian holomorphic vector bundles\cite{DNW19,DNWZ19}, we generalize (1) in Theorem \ref{thm:Berndtsson minimum} to the following:

\begin{thm}\label{thm:intr-minimum compact action}
Let $\Omega \subseteq \mc_{t}^{n} \times \mc_{z}^{m}$ be a pseudoconvex domain,
such that $\Omega_t$ are connected for all $t$ in $D:=p(\Omega)$.
Let $\tilde E=\Omega\times\mc^r$ be the trivial holomorphic vector bundle of rank $r$ over $\Omega$,
and let $\tilde h(t,z)$ be an Hermitian metric on $\tilde E$ which is viewed as a smooth map from $\Omega$ to the space of positive Hermitian matrices.
Let $E=D\times \mc^r$ be the trivial bundle over $D$ with the hermitian metric given by
$$h(t)=\int_{{p}^{-1}(t)}\tilde h(t,z)dV_z.$$
Assume that there is a compact Lie group $K$ acting holomorphically on $\Omega$ by acting on the second variable $z$, such that
\begin{itemize}
\item[(i)] $\tilde{h}(t, z) dV_{z}$ is $K$-invariant for $t\in D$, and
\item[(ii)] all $K$-invariant holomorphic functions on $\Omega_t$ are constant, $t\in D$.
\end{itemize}
If $(\tilde E,\tilde h)$ is Nakano semi-positive and $h$ is $C^2$, then $(E, h)$ is Nakano semi-positive.
\end{thm}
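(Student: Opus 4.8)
The plan is to reduce, via the Deng--Ning--Zhang--Zhou characterization of Nakano positivity \cite{DNW19,DNWZ19}, to an optimal $L^2$-estimate for $\dbar$ on $D$; to obtain that estimate by solving $\dbar$ upstairs on $\Omega$ with H\"ormander's theory for the Nakano semi-positive bundle $(\tilde E,\tilde h)$; and to descend the solution to $D$ by a fibrewise $K$-average, where conditions (i) and (ii) enter. Since Nakano semi-positivity is local, I would fix $t_0\in D$ and verify, on a small ball $B\subseteq D$ around $t_0$, the optimal $L^2$-estimate that by \cite{DNW19,DNWZ19} characterizes Nakano semi-positivity of $(E,h)$ on $B$: for every smooth strictly plurisubharmonic $\psi$ on $B$ and every smooth $\dbar$-closed $E$-valued $(n,1)$-form $\alpha=\sum_{j=1}^{n}\hat\alpha_j\,dt_1\wedge\cdots\wedge dt_n\wedge d\bar t_j$ on $B$ with
\[
M:=\int_B\big\langle[(i\ddbar\psi)\otimes\mathrm{Id}_E]^{-1}\alpha,\alpha\big\rangle_h\,e^{-\psi}\,dV_t<\infty ,
\]
there is $u$ with $\dbar u=\alpha$ and $\int_B|u|_h^2\,e^{-\psi}\,dV_t\le M$. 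Shrinking $B$, I may assume $\Omega_B:=(B\times\mc^m)\cap\Omega$ is pseudoconvex and $\{z_0\}\times B\subseteq\Omega_B$ for a fixed $z_0$; the $K$-action on $\Omega_B$ still satisfies (i) and (ii). Lift the data by $\tilde\alpha:=(p^{*}\alpha)\wedge dz$, $dz:=dz_1\wedge\cdots\wedge dz_m$ --- a $\dbar$-closed $\tilde E$-valued $(n+m,1)$-form on $\Omega_B$ --- and $\tilde\psi_\varepsilon:=p^{*}\psi+\varepsilon\rho$, where $\rho\ge0$ is a smooth $K$-invariant strictly plurisubharmonic exhaustion of $\Omega_B$ (average any such exhaustion over $K$).

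On $\Omega_B$ the curvature of $\tilde h\,e^{-\tilde\psi_\varepsilon}$ is $i\Theta_{\tilde h}+i\ddbar\tilde\psi_\varepsilon\otimes\mathrm{Id}_{\tilde E}\ge\varepsilon\,i\ddbar\rho\otimes\mathrm{Id}_{\tilde E}>0$, so H\"ormander's $L^2$-existence theorem for $\dbar$ on the pseudoconvex $\Omega_B$ produces $\tilde u_\varepsilon$ with $\dbar\tilde u_\varepsilon=\tilde\alpha$ and $\int_{\Omega_B}|\tilde u_\varepsilon|_{\tilde h}^2\,e^{-\tilde\psi_\varepsilon}\,dV\le\int_{\Omega_B}\langle\mathbf B_\varepsilon^{-1}\tilde\alpha,\tilde\alpha\rangle_{\tilde h}\,e^{-\tilde\psi_\varepsilon}\,dV$, with $\mathbf B_\varepsilon$ the positive-definite pointwise curvature operator on $(n+m,1)$-forms. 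The crucial point is that, in coordinates $w=(t,z)$, $\tilde\alpha$ has no antiholomorphic component in the $z$-directions, so pairing against $\mathbf B_\varepsilon^{-1}$ only sees the Schur complement $\mathbf S_\varepsilon$ of the $zz$-block of $\mathbf B_\varepsilon$; since $i\ddbar(p^{*}\psi)$ contributes only $i\ddbar\psi$, in the $tt$-block, while $i\Theta_{\tilde h}+\varepsilon\,i\ddbar\rho\otimes\mathrm{Id}$ is positive semidefinite, one gets $\mathbf S_\varepsilon\ge(i\ddbar\psi)\otimes\mathrm{Id}_E$, hence $\mathbf S_\varepsilon^{-1}\le[(i\ddbar\psi)\otimes\mathrm{Id}_E]^{-1}$. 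As $\tilde\alpha$ and $(i\ddbar\psi)^{-1}$ do not depend on $z$, Fubini together with $h(t)=\int_{\Omega_t}\tilde h(t,z)\,dV_z$ and $e^{-\varepsilon\rho}\le1$ give $\int_{\Omega_B}\langle\mathbf B_\varepsilon^{-1}\tilde\alpha,\tilde\alpha\rangle_{\tilde h}\,e^{-\tilde\psi_\varepsilon}\,dV\le M$. Writing $\tilde u_\varepsilon=\tilde v_\varepsilon\,dw_1\wedge\cdots\wedge dw_{n+m}$, the equation forces (up to a fixed sign) $\partial\tilde v_\varepsilon/\partial\bar z_k=0$ and $\partial\tilde v_\varepsilon/\partial\bar t_j=\hat\alpha_j$. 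Since $e^{-\tilde\psi_\varepsilon}$ is bounded below uniformly in $\varepsilon$ on each $\Omega'\Subset\Omega_B$, a subsequence converges weakly in $L^2_{\mathrm{loc}}$ to $\tilde v$; lower semicontinuity and $e^{\varepsilon\rho}\to1$ give $\int_{\Omega_B}|\tilde v|_{\tilde h}^2\,e^{-p^{*}\psi}\,dV\le M$, with $\tilde v$ still satisfying $\partial\tilde v/\partial\bar z_k=0$ and $\partial\tilde v/\partial\bar t_j=\hat\alpha_j$.

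To descend, set $u(t):=h(t)^{-1}\int_{\Omega_t}\tilde h(t,z)\,\tilde v(t,z)\,dV_z$, viewed as the $E$-valued $(n,0)$-form $u(t)\,dt_1\wedge\cdots\wedge dt_n$. By construction $u(t)$ is the $L^2(\Omega_t,\tilde h(t,\cdot)\,dV_z)$-orthogonal projection of $\tilde v(t,\cdot)$ onto the constants $\mc^r$, on which the induced norm is $|\cdot|_{h(t)}$; hence $|u(t)|_{h(t)}^2\le\int_{\Omega_t}|\tilde v(t,z)|_{\tilde h}^2\,dV_z$ and $\int_B|u|_h^2\,e^{-\psi}\,dV_t\le M$. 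To see $\dbar u=\alpha$, pick a smooth $w=\hat w\,dt_1\wedge\cdots\wedge dt_n$ on $B$ with $\dbar w=\alpha$ and write $\tilde v=p^{*}\hat w+H$; then $\dbar H=0$, so $H$ is holomorphic on $\Omega_B$. For each $t$, the fibrewise average $z\mapsto\int_K H(t,k\cdot z)\,dk$ is $K$-invariant and holomorphic on $\Omega_t$, hence by (ii) equal to a constant $c(t)$, which is holomorphic in $t$ (integral over the compact $K$ of the holomorphic family $t\mapsto H(t,k\cdot z_0)$). Finally the substitution $z\mapsto k^{-1}\!\cdot z$ together with (i) --- which reads $\tilde h(t,k^{-1}\!\cdot z)\,|J_{k^{-1}}(z)|^{2}=\tilde h(t,z)$, $J_k$ the holomorphic Jacobian of $z\mapsto k\cdot z$ --- shows $\int_{\Omega_t}\tilde h(t,z)\,H(t,k\cdot z)\,dV_z$ is independent of $k$; averaging over $K$ and using $\int_K H(t,k\cdot z)\,dk\equiv c(t)$ yields $h(t)\,c(t)=\int_{\Omega_t}\tilde h(t,z)\,H(t,z)\,dV_z$, so $u=w+c$ with $c$ holomorphic, and therefore $\dbar u=\dbar w=\alpha$. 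This verifies the estimate on $B$, so by \cite{DNW19,DNWZ19} the bundle $(E,h)$ is Nakano semi-positive near $t_0$; as $t_0\in D$ was arbitrary, it is Nakano semi-positive.

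I expect the descent in the last paragraph to be the main obstacle: the solution $u$ must simultaneously solve $\dbar u=\alpha$ on $D$ \emph{and} keep the optimal $L^2$-bound, and this is exactly where (ii) is needed --- it forces the fibrewise $K$-average of the holomorphic part of $\tilde v$ to land in the finite-rank bundle $E$ --- together with (i) --- which makes the fibre measure $\tilde h\,dV_z$ $K$-invariant, so that this average coincides with the norm-nonincreasing orthogonal projection onto $E$. The Schur-complement observation in the second paragraph is a secondary point; it is what makes the degeneracy of $i\ddbar(p^{*}\psi)$ along the fibres harmless.
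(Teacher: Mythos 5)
Your proposal is correct, and it follows the paper's overall strategy---reduce to the optimal $L^2$-estimate condition of \cite{DNWZ19}, solve $\dbar$ upstairs on $\Omega$ via H\"ormander's theorem for the Nakano semi-positive $(\tilde E,\tilde h)$, and push the solution down to $D$ by Fubini---but the descent step, which is exactly where hypotheses (i) and (ii) do their work, is implemented by a genuinely different mechanism. The paper takes the solution $\tilde u$ of \emph{minimal} weighted $L^2$-norm: since $\tilde f$, $\tilde\psi$ and the fibre measure $\tilde h\,dV_z$ are $K$-invariant, $\tilde u(t,g\cdot z)$ is again a minimal solution for every $g\in K$, so uniqueness of the minimal solution forces $\tilde u$ to be $K$-invariant, and (ii) then makes $\tilde u$ constant along the (connected) fibres, so it literally is a section of $E$. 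You instead take an arbitrary solution $\tilde v$ and apply the fibrewise orthogonal projection $u(t)=h(t)^{-1}\int_{\Omega_t}\tilde h\,\tilde v\,dV_z$ onto the constants, which is norm-nonincreasing for free; the content is then to check that the projection still solves $\dbar u=\alpha$, which you do by writing $\tilde v=p^{*}\hat w+H$ with $H$ holomorphic, using (ii) to see that the $K$-average of $H$ is fibrewise constant, and (i) to identify the weighted fibrewise average of $H$ with that constant. Both arguments are sound; yours is a bit longer but avoids invoking existence and uniqueness of the minimal solution and exhibits the descended section explicitly as a projection, while the paper's minimality trick is shorter. Two further points in your favour: localizing to a ball $B$ keeps the base bounded, as Definition \ref{def:optimal L2 estimate cond} formally requires, and your $\varepsilon$-regularization plus Schur-complement bookkeeping makes rigorous a point the paper glosses over, namely that $p^{*}\psi$ is degenerate in the $z$-directions, so the pairing $\langle[i\partial\bar\partial\tilde\psi,\Lambda]^{-1}\tilde f,\tilde f\rangle$ appearing in the paper's estimate has to be interpreted by exactly the limiting argument you spell out.
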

\begin{rem}
In Theorem \ref{thm:intr-minimum compact action}, if $K$ is not compact as assumption, then $h$ would be identically equal to $+\infty$.
\end{rem}

Our method to Theorem \ref{thm:intr-minimum compact action} is different from those in \cite{Ber98} and \cite{Ber09}.
The idea of the proof is as follows.
Since $(\tilde E,\tilde h)$ is Nakano semi-positive,
it satisfies the optimal $L^2$-estimate condition (see \S \ref{sec:pre} for definition) by H\"ormander's $L^2$-estimate of $\bar\partial$,
which can imply that $(E, h)$ also satisfies the optimal $L^2$-estimate condition.
By \cite[Theorem 1.1]{DNWZ19}, we see that $(E, h)$ is Nakano semi-positive.

With the same method, Theorem \ref{thm:intr-minimum compact action} can be generalized to general K\"ahler fibrations and Nakano
semi-positive Hermitian vector bundles. But we just restrict ourselves to the basic context as above.
If the fibers $\Omega_t$ in Theorem \ref{thm:intr-minimum compact action} are not assumed to be connected,
we can also get a similar result by considering the GIT quotient $\Omega//K$, as in \cite{DZZ14}.
Under the assumption as in Theorem \ref{thm:intr-minimum compact action}, 
we can canonically identify $\Omega//K$ with $D$.

Theorem \ref{thm:intr-minimum compact action} considers actions of compact groups.
However, following the idea in \cite{DZZ14,DZZ17}, it can be generalized to certain noncompact group actions.
In the present context, we consider tube domains as in Theorem \ref{thm:Berndtsson minimum} which corresponds to the action of the group $\mr^m$.

\begin{thm}\label{thm:intro-minimum noncompact action}
Let $\Omega \subseteq \mc_{t}^{n} \times \mc_{z}^{m}$ be a pseudoconvex domain,
such that $\Omega_t=U_t\times i\mr^m$ are (connected) tube domains for all $t$ in $D:=p(\Omega)$.
Let $\tilde E=\Omega \times\mc^r\ra\Omega$
be the trivial holomorphic vector bundle of rank $r$ on $\Omega$.
Let $\tilde h(t,z)$ be an Hermitian metric on $\tilde E$, which is independent of the imaginary part $Im z$ of $z$.
Let $h(t)$ be the Hermitian metric on the trivial vector bundle $E=D \times\mc^r\ra D$ over $D$,
given by
$$h(t):=\int_{U_t}\tilde h(t,Rez)d V_{Rez}.$$
If ($\tilde E, \tilde h$) is Nakano semi-positive and $h$ is $C^2$,
then ($E, h$) is Nakano semi-positive.
\end{thm}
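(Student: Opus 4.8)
The plan is to deduce Theorem~\ref{thm:intro-minimum noncompact action} from Theorem~\ref{thm:intr-minimum compact action} by an exponential change of variables that turns each tube fiber into a Reinhardt domain and the $\mr^m$-translation symmetry of $\Omega$ into the action of the compact torus $(S^1)^m$, followed by a limiting argument in the spirit of \cite{DZZ14,DZZ17}. One first records the elementary geometric facts: since each fiber $\Omega_t=U_t\times i\mr^m$ is a tube domain and $\Omega$ is pseudoconvex, every $U_t$ is convex and $\Omega$ is invariant under all translations $z\mapsto z+ia$, $a\in\mr^m$. It is convenient to reduce at once to bounded fibers: exhaust $\Omega$ by $\mr^m$-translation-invariant pseudoconvex subdomains $\Omega^{(k)}\uparrow\Omega$ whose fibers $U_t^{(k)}$ are relatively compact convex subsets of $U_t$ with smooth boundary (for instance by intersecting $\Omega$ with balls and with sublevel sets of $-\log\mathrm{dist}(\cdot,\partial\Omega)$, which are convex in $\mathrm{Re}\,z$ because $-\log\mathrm{dist}(\cdot,\partial\Omega)$ is $\mathrm{Im}\,z$-independent, and smoothing slightly), prove the statement for each $\Omega^{(k)}$, and let $k\to\infty$ at the end.

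So fix one $\Omega^{(k)}$, with bounded convex fibers $U^{(k)}_t$, and for $\varepsilon>0$ put $\Phi_\varepsilon(t,z)=(t,e^{\varepsilon z_1},\dots,e^{\varepsilon z_m})$ and $\Omega^{(k)}_\varepsilon:=\Phi_\varepsilon(\Omega^{(k)})\subseteq\mc^n_t\times(\mc^*)^m_w$. Since $\Omega^{(k)}$ is invariant under the deck group $i\tfrac{2\pi}{\varepsilon}\mathbb{Z}^m$ of $\Phi_\varepsilon$, the latter is a holomorphic covering onto $\Omega^{(k)}_\varepsilon$; because the fibers $U^{(k)}_t$ are bounded, $\Omega^{(k)}_\varepsilon$ avoids the hyperplanes $\{w_j=0\}$ and is locally biholomorphic to $\Omega^{(k)}$, hence is a pseudoconvex domain with connected Reinhardt fibers $(\Omega^{(k)}_\varepsilon)_t=\{w:\tfrac1\varepsilon\log|w_j|\in U^{(k)}_t,\ j=1,\dots,m\}$. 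Since $\tilde h$ is independent of $\mathrm{Im}\,z$, it descends to a Hermitian metric $\tilde h_\varepsilon$ on $\Omega^{(k)}_\varepsilon\times\mc^r$, still Nakano semi-positive because $\Phi_\varepsilon$ is locally biholomorphic; and $\tilde h_\varepsilon(t,w)\,dV_w$ is invariant under the torus $K=(S^1)^m$ acting by $w_j\mapsto e^{i\theta_j}w_j$, while every $K$-invariant holomorphic function on a connected Reinhardt domain is constant, so hypotheses (i), (ii) of Theorem~\ref{thm:intr-minimum compact action} are met. A change-of-variables computation (expressing $dV_w$ through $dV_z$ and integrating over one period in each $\mathrm{Im}\,z_j$) identifies the $K$-invariant direct image metric as $h^{(k)}_\varepsilon(t)=c_\varepsilon\,g^{(k)}_\varepsilon(t)$, $c_\varepsilon>0$ a constant, where $g^{(k)}_\varepsilon(t)=\int_{U^{(k)}_t}\tilde h(t,x)\,e^{2\varepsilon\sum_j x_j}\,dV_x$ with $x=\mathrm{Re}\,z$; this is $C^2$ on $D$ because $U^{(k)}_t$ is relatively compact with smooth boundary and $\tilde h$ is smooth. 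As a constant factor does not affect curvature, Theorem~\ref{thm:intr-minimum compact action} gives that $(E,g^{(k)}_\varepsilon)$ is Nakano semi-positive for every $\varepsilon>0$.

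Now let $\varepsilon\to0^+$: since $U^{(k)}_t$ is bounded, $e^{2\varepsilon\sum_j x_j}\to1$ uniformly on $U^{(k)}_t$ for $t$ in compact subsets of $D$, so $g^{(k)}_\varepsilon\to h^{(k)}$ locally uniformly, where $h^{(k)}(t)=\int_{U^{(k)}_t}\tilde h(t,x)\,dV_x$. Each $(E,g^{(k)}_\varepsilon)$, being Nakano semi-positive, satisfies the optimal $L^2$-estimate condition, and solving $\bar\partial$ on Stein open subsets of $D$ with the $g^{(k)}_\varepsilon$-estimates and passing to a weak limit of the solutions (using the locally uniform convergence to control both sides) shows that $(E,h^{(k)})$ satisfies the optimal $L^2$-estimate condition; hence $(E,h^{(k)})$ is Nakano semi-positive by \cite[Theorem 1.1]{DNWZ19}. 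Finally $h^{(k)}$ increases to $h(t)=\int_{U_t}\tilde h(t,x)\,dV_x$ as $k\to\infty$, and one more application of the same optimal $L^2$-estimate limiting argument together with \cite[Theorem 1.1]{DNWZ19} yields that $(E,h)$ is Nakano semi-positive, proving the theorem. The step I expect to be the main obstacle is the first limit $\varepsilon\to0$: one must keep careful track of the dependence of the $\bar\partial$-solutions on $\varepsilon$ and of the behaviour of the weight $e^{2\varepsilon\sum_j x_j}$ (which is precisely why one reduces to bounded fibers first), together with the regularity of $h^{(k)}_\varepsilon$ needed to invoke Theorem~\ref{thm:intr-minimum compact action}; the remaining verifications (pseudoconvexity and Reinhardt structure of $\Omega^{(k)}_\varepsilon$, torus invariance, the change-of-variables identity, and the limit $k\to\infty$) are routine.
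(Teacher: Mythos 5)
Your core idea --- push the tube fibers forward under an exponential map so that they become Reinhardt domains, let the imaginary-translation symmetry become the $(S^1)^m$-action, and invoke Theorem \ref{thm:intr-minimum compact action} --- is exactly the one the paper uses, but you have made the reduction much harder than it needs to be. The paper applies the single map $(t,z)\mapsto(t,e^{z_1},\dots,e^{z_m})$, with no parameter $\varepsilon$ and no exhaustion, and absorbs the Jacobian of the change of variables into the metric itself: writing $\tilde h'(t,w)=\tilde h(t,\ln|w|)$ for the descended metric on $\Omega^*=f(\Omega)$, it replaces $\tilde h'$ by $\tilde h''=c\prod_j|w_j|^{-a}\,\tilde h'$ with the power chosen so that $\int_{\Omega^*_t}\tilde h''\,dV_w$ equals $h(t)$ \emph{exactly}; since $\log|w_j|$ is pluriharmonic on $\mc^*$, this rescaling does not change the curvature, so $(\tilde E',\tilde h'')$ is still Nakano semi-positive and one application of Theorem \ref{thm:intr-minimum compact action} finishes the proof, with the stated $C^2$ hypothesis on $h$ applying verbatim. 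Your version instead keeps the Jacobian weight $e^{2\varepsilon\sum_j x_j}$ in the direct image and removes it by letting $\varepsilon\to0$, which is what forces the preliminary reduction to bounded fibers and the second limit $k\to\infty$.

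The price is not only length: the step you yourself flag is a genuine gap. Theorem \ref{thm:intr-minimum compact action} requires the direct image metric to be $C^2$, and in the theorem you are proving this regularity is a \emph{hypothesis}, available only for $h$ itself. Your auxiliary metrics $g^{(k)}_\varepsilon(t)=\int_{U^{(k)}_t}\tilde h(t,x)e^{2\varepsilon\sum_j x_j}dV_x$ are integrals over truncated, $t$-dependent domains, and their $C^2$ regularity in $t$ does not follow from that of $h$ (differentiating an integral over a varying domain produces boundary contributions that you have not controlled; ``smooth boundary of $U^{(k)}_t$'' is not by itself enough). So, as written, you cannot legitimately invoke Theorem \ref{thm:intr-minimum compact action} for $(E,g^{(k)}_\varepsilon)$. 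The two limiting arguments themselves (weak limits of $\bar\partial$-solutions under locally uniform convergence of the metrics, then monotone convergence in $k$, feeding into the optimal $L^2$-estimate characterization of \cite{DNWZ19}) are standard and would go through, but the entire apparatus, together with the regularity problem it creates, disappears once one notices that the Jacobian factor can be moved into the metric at zero cost in curvature.
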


A Corollary of Theorem \ref{thm:intro-minimum noncompact action} is the following

\begin{thm}\label{thm:intro real convex case}
Let $\Omega_0 \subseteq \mr_{t}^{n} \times \mr_{x}^{m}$ be a convex domain,
let $p_0:\Omega_0 \rightarrow \mr_{t}^{n}$ be the natural projection,
and let $\Omega_{0,t}=p_0^{-1}(t)$ for $t\in D_0:=p_0(\Omega_0)$.
Let $\tilde g(t,x):\Omega_0 \rightarrow GL(r,\mc)$ be an Hermitian metric on the trivial vector bundle
$\tilde E_0=\Omega_0 \times\mc^r\ra \Omega_0$.
Let $g(t):D_0 \ra GL(r,\mc)$ be the Hermitian metric on the trivial vector bundle $E_0=D_0 \times\mc^r\ra D_0$ over $D_0$
given by
$$g(t):=\int_{\Omega_{0,t}}\tilde g(t,x)dV_x.$$
If ($\tilde E_0, \tilde g$) is Nakano semi-positive and $g$ is $C^2$,
then ($E_0, g$) is Nakano semi-positive.
\end{thm}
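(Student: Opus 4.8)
The plan is to reduce Theorem 1.5 to Theorem 1.4 by a standard complexification (tube-domain) construction. Let me sketch it.

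Starting from the convex domain $\Omega_0 \subseteq \mathbb{R}^n_t \times \mathbb{R}^m_x$, I would first produce the associated tube domain. Set $\Omega := \{(t + i\tau, x + iy) : (t,x) \in \Omega_0, \ \tau \in \mathbb{R}^n, \ y \in \mathbb{R}^m\} \subseteq \mathbb{C}^n \times \mathbb{C}^m$. Wait — I need to be careful: Theorem 1.4 wants the fibers over $\mathbb{C}^n_t$ to be tube domains in the $z$-variable only, and it wants $\Omega$ itself pseudoconvex. So actually I should complexify only the $x$-variable and leave $t$ real, i.e. take $\Omega := \{(t, x+iy) : (t,x)\in \Omega_0, y\in\mathbb{R}^m\}$... but that isn't open in $\mathbb{C}^n_t$. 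The right move is to also complexify $t$ freely: put $D := D_0 + i\mathbb{R}^n = \{t + i\tau : t \in D_0, \tau \in \mathbb{R}^n\}$ and for each such $t+i\tau$ let the fiber be $U_{t} \times i\mathbb{R}^m$ where $U_t = \Omega_{0,t} = \{x : (t,x)\in\Omega_0\}$. Concretely $\Omega := \{(w,z) \in \mathbb{C}^n\times\mathbb{C}^m : (\mathrm{Re}\,w, \mathrm{Re}\,z) \in \Omega_0\}$. Then $\Omega$ is a tube domain over the convex base $\Omega_0 \subseteq \mathbb{R}^{n+m}$, hence is convex, hence pseudoconvex; its fiber over $w$ is $\Omega_{0,\mathrm{Re}\,w} \times i\mathbb{R}^m$, a tube domain; and $D := p(\Omega) = D_0 + i\mathbb{R}^n$, which is the tube over the convex set $D_0$. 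This fits the hypotheses of Theorem 1.4.

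Next I would lift the bundle data. Define $\tilde h(w,z) := \tilde g(\mathrm{Re}\,w, \mathrm{Re}\,z)$ on $\tilde E = \Omega \times \mathbb{C}^r$; this is a Hermitian metric independent of $\mathrm{Im}\,z$ (and in fact of $\mathrm{Im}\,w$ too). The key point to check is that Nakano semi-positivity of $(\tilde E_0,\tilde g)$ in the real sense used in Theorem 1.5 is equivalent to Nakano semi-positivity of $(\tilde E,\tilde h)$ as a Hermitian holomorphic bundle: since $\tilde h$ depends only on real parts, its complex Hessian / Chern curvature is, up to the usual identification, exactly the real Hessian matrix-form of $\tilde g$, so the two notions of Nakano positivity literally coincide. (This is presumably the content of whatever definition of "Nakano semi-positive" for a metric on $\mathbb{R}$-space the paper has set up; I would cite that here.) Then $h(w) := \int_{\Omega_{0,\mathrm{Re}\,w}} \tilde h(w, \mathrm{Re}\,z)\, dV_{\mathrm{Re}\,z} = g(\mathrm{Re}\,w)$, so $h$ is $C^2$ on $D$ because $g$ is $C^2$ on $D_0$, and Theorem 1.4 applies and tells us $(E,h)$ is Nakano semi-positive on $D = D_0 + i\mathbb{R}^n$. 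Finally, running the same real-part/complex-Hessian dictionary backwards, Nakano semi-positivity of $(E,h)$ — a metric depending only on $\mathrm{Re}\,w$ — is equivalent to Nakano semi-positivity of $(E_0, g)$ in the real sense. That completes the deduction.

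The main obstacle, and the only place requiring real care, is the translation dictionary between "matrix-valued convexity / real Nakano positivity" of $\tilde g$ on $\Omega_0$ and honest Nakano positivity of the complexified metric $\tilde h$ on the tube $\Omega$ — i.e. verifying that complexifying a convex-type real metric neither creates nor destroys curvature positivity, and that the push-forward operation commutes with this complexification (which is immediate once one notes $h = g\circ \mathrm{Re}$ and the fiber integrals match). Everything else — pseudoconvexity of the tube, the $C^2$ hypothesis, connectedness of fibers — is automatic from convexity of $\Omega_0$. One should also remark that $g(t) < \infty$, i.e. the fiber integrals converge; this is part of the hypothesis that $g$ is a ($C^2$) Hermitian metric, just as in Theorem 1.4, and is needed for the statement to be non-vacuous.
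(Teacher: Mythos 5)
Your proposal is correct and follows essentially the same route as the paper: complexify $\Omega_0$ to the tube domain $\Omega=\Omega_0+i\mr^{n+m}$, extend $\tilde g$ to a metric $\tilde h$ independent of the imaginary parts, apply Theorem \ref{thm:intro-minimum noncompact action}, and translate back via the real/complex Nakano-positivity dictionary (the paper's Remark \ref{rem:real and complex Nak pos}). The ``translation dictionary'' you flag as the main point of care is exactly what that remark supplies, so no gap remains.
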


The concept of Nakano positivity of $(\tilde E_0, \tilde g_0)$ was introduced in \cite{Rau13}(phrased as Nakano log concave there) 
and will be recalled in \S \ref{sec:pre}.

In the case that $\Omega=\mr^n\times\mr^m$, Theorem \ref{thm:intro real convex case}
is proved and called the  matrix-valued Prekopa's theorem by Raufi in \cite{Rau13}.
The proof in \cite{Rau13} contains two main ingredients-
Berndtsson's method to the positivity of direct image bundles \cite{Ber09} and a Fourier transform technique.
To avoid the Fourier transform technique and complex analysis in the proof,
Cordero-Erausquin recently produced a new proof of the matrix-valued Prekopa's theorem based on $L^2$-methods in real analysis \cite{Cor19}.

Our method to Theorem \ref{thm:intro real convex case} is different from those in \cite{Rau13} and \cite{Cor19}.
With Theorem \ref{thm:intr-minimum compact action} at hand, we reduce Theorem \ref{thm:intro-minimum noncompact action}
to Theorem \ref{thm:intr-minimum compact action} by considering the covering map $\pi:\mc^m\ra (\mc^*)^n$.
This idea is motivated by the work in \cite{DZZ14}.
Then it is obvious that Theorem \ref{thm:intro real convex case} is a  consequence of Theorem \ref{thm:intro-minimum noncompact action}.
In this way, we avoid involving vector bundles of infinite rank and the Fourier transform technique.

$\mathbf{Acknowlegements.}$
The authors are grateful to Professor Jiafu Ning, Zhiwei Wang, and Xiangyu Zhou for helpful discussions.
The authors are partially supported by the NSFC grant 11871451.

\section{Preliminaries}\label{sec:pre}
We recall some notions and known results that will be used later.

\begin{defn}[\cite{DNWZ19}]\label{def:optimal L2 estimate cond}
Let $\Omega\subset\mc^{n}$ be a bounded domain.
Let $E=\Omega\times\mc^r$ be the trivial holomorphic vector bundle over $\Omega$ whose canonical frame is denoted by $\{e_1,\cdots, e_r\}$.
Let $h$ be a Hermitian metric on $E$ such that $h(e_\lambda, e_\mu)=h_{\lambda\mu}$.
We say that $(E,h)$ satisfies \emph{the optimal $L^2$-estimate condition},
if for any smooth strictly plurisubharmonic function $\psi$ on $\Omega$, for any $f \in C_{c}^{\infty}\left(\Omega, \wedge^{0, 1} \mathrm{T}^{*}\Omega \otimes \mathrm{E}\right)$ with $\bar{\partial} f=0$,
there is a locally integrable section $u$ of $E$, satisfying $\bar{\partial} u=f$ in the sense of distribution, and
$$\int_{\Omega}|u|_{h}^{2} e^{-\psi} d V \leqslant \int_{\Omega} \sum_{i, j,\lambda,\mu} \psi^{\text {ij}} f_{i \lambda} \overline{f_{j \mu}}h_{\lambda \mu} e^{-\psi}dV,$$
where $dV$ is the Lebesgue measure,
$(\psi^{i j})=\left(\psi_{i j}\right)^{-1}$=$\left(\frac{\partial^{2} \psi}{\partial z_{i} \partial \bar{z}_{j}}\right)^{-1}$, and $f=\sum_{\lambda=1}^r(\sum_{j=1}^{n} f_{j \lambda} d \bar{z}_{j}) \otimes e_{\lambda}$.
\end{defn}

By the $L^2$-estimate of $\bar\partial$ by H\"ormander and Demailly, we have

\begin{lem}[c.f. {\cite[Theorem 4.5]{Dem}}]\label{lem:L^2 estimate}
Let $\Omega, E, h$ be as in Definition \ref{def:optimal L2 estimate cond}.
If $\Omega$ is pseudoconvex and $(E,h)$ is Nakano semi-positive, then
$(E,h)$ satisfies the optimal $L^2$-estimate condition.
\end{lem}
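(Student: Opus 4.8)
The plan is to prove Lemma \ref{lem:L^2 estimate} by combining the standard H\"ormander--Demailly $L^2$-existence theorem for $\bar\partial$ on pseudoconvex domains with Nakano semi-positive twisting, and then optimizing the choice of auxiliary weight. Recall that for a Nakano semi-positive Hermitian vector bundle $(E,h)$ over a pseudoconvex domain $\Omega$, equipped with a smooth strictly plurisubharmonic exhaustion, Demailly's theorem (the cited \cite[Theorem 4.5]{Dem}) produces, for every $\bar\partial$-closed $(0,1)$-form $f$ with values in $E$ and every smooth metric $e^{-\varphi}$ on the trivial line bundle, a solution $u$ of $\bar\partial u = f$ satisfying
$$\int_\Omega |u|_h^2\, e^{-\varphi}\, dV \;\leqslant\; \int_\Omega \big\langle B^{-1} f, f\big\rangle_h\, e^{-\varphi}\, dV,$$
where $B = [i\Theta(E,h) + i\partial\bar\partial\varphi \otimes \mathrm{Id}_E, \Lambda_\omega]$ is the curvature operator acting on $(0,1)$-forms, provided $B$ is positive definite and the right-hand side is finite; here $\omega$ is an auxiliary K\"ahler metric on $\Omega$ with respect to which the adjoint $\Lambda_\omega$ and the pointwise norms are computed.

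The key step is the choice of data. Given the smooth strictly plurisubharmonic function $\psi$ from Definition \ref{def:optimal L2 estimate cond}, I would take the K\"ahler metric to be $\omega = i\partial\bar\partial\psi$ itself (which is legitimate since $\psi$ is strictly plurisubharmonic, so $\omega$ is a genuine positive $(1,1)$-form, Cauchy-complete issues being handled by the usual exhaustion/limiting argument on pseudoconvex $\Omega$), and take the weight to be $\varphi = \psi$. Then $i\partial\bar\partial\varphi \otimes \mathrm{Id}_E = \omega \otimes \mathrm{Id}_E$, so the commutator $[\omega \otimes \mathrm{Id}_E, \Lambda_\omega]$ acting on $(0,1)$-forms is exactly the identity operator; since $(E,h)$ is Nakano semi-positive, $[i\Theta(E,h), \Lambda_\omega]$ is a non-negative operator on $(0,1)$-forms, so $B \geqslant \mathrm{Id}$, hence $B$ is positive definite and $\langle B^{-1} f, f\rangle_h \leqslant \langle f, f\rangle_h$ pointwise. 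A short linear-algebra computation then identifies $\langle f, f\rangle_{h,\omega}$, where the norm on forms is taken with respect to $\omega = i\partial\bar\partial\psi$, with the expression $\sum_{i,j,\lambda,\mu}\psi^{ij} f_{i\lambda}\overline{f_{j\mu}} h_{\lambda\mu}$ appearing in Definition \ref{def:optimal L2 estimate cond}: writing $f = \sum_{j,\lambda} f_{j\lambda}\, d\bar z_j \otimes e_\lambda$, the inverse Gram matrix $(\psi^{ij})$ of $\omega$ supplies precisely the contraction in the $d\bar z_j$ indices, and the Lebesgue measure $dV$ versus the volume $dV_\omega$ of $\omega$ differ by exactly the Jacobian factor that cancels against $dV_\omega$ in $\langle f, f\rangle_{h,\omega}\, dV_\omega$. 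Thus the estimate becomes $\int_\Omega |u|_h^2 e^{-\psi}\, dV \leqslant \int_\Omega \sum_{i,j,\lambda,\mu}\psi^{ij} f_{i\lambda}\overline{f_{j\mu}} h_{\lambda\mu}\, e^{-\psi}\, dV$, which is the optimal $L^2$-estimate condition.

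I expect the main obstacle to be the bookkeeping of normalizations — matching the pointwise curvature-operator expression $\langle B^{-1}f, f\rangle$ (computed intrinsically with respect to $\omega$) to the coordinate expression $\sum \psi^{ij} f_{i\lambda}\overline{f_{j\mu}} h_{\lambda\mu}$ (computed with the Lebesgue measure $dV$), keeping careful track of the discrepancy between $dV$ and $dV_\omega = \det(\psi_{ij})\, dV$ and the correspondingly weighted norm on $(0,1)$-forms, so that all Jacobian factors cancel exactly and one recovers the sharp constant $1$. A secondary technical point is justifying the use of $\omega = i\partial\bar\partial\psi$ as a base metric even when it is incomplete: one handles this by the standard device of exhausting $\Omega$ by relatively compact pseudoconvex subdomains (or by adding $\varepsilon$ times a complete metric and letting $\varepsilon \to 0$), solving on each with a uniform bound, and extracting a weak limit; the compact support of $f$ makes the right-hand side finite throughout, so the limiting solution exists and satisfies the required estimate. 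Once these normalizations are settled the proof is immediate from Demailly's theorem, so the content is entirely in the correct setup rather than in any new analytic input.
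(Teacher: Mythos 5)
The paper offers no argument for this lemma beyond the citation of H\"ormander--Demailly, so the real question is whether your derivation is sound. The overall strategy is right, but two of your key pointwise claims are false as stated because you work with $(0,1)$-forms where the cited theorem (and the identities you invoke) live on $(n,1)$-forms. First, the commutator identity: on $(p,q)$-forms in $\mc^n$ one has $[\omega,\Lambda_\omega]=(p+q-n)\,\mathrm{Id}$, so on $(0,1)$-forms $[\omega\otimes\mathrm{Id}_E,\Lambda_\omega]=(1-n)\,\mathrm{Id}$, which is \emph{negative} for $n\geq 2$; your conclusion $B\geq\mathrm{Id}$ fails. It is on $(n,1)$-forms that this commutator equals the identity. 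Second, the Jacobian cancellation: for a genuine $(0,1)$-form, $\langle f,f\rangle_{h,\omega}\,dV_\omega=\sum\psi^{ij}f_{i\lambda}\overline{f_{j\mu}}h_{\lambda\mu}\,\det(\psi_{ij})\,dV$, and similarly $\int|u|_h^2\,dV_\omega=\int|u|_h^2\det(\psi_{ij})\,dV$; the determinant does not cancel, so you would prove an estimate with an extra weight $\det(\psi_{ij})$ on both sides, which cannot simply be dropped. Both defects disappear simultaneously once you replace $f$ by the $E$-valued $(n,1)$-form $f\wedge dz_1\wedge\cdots\wedge dz_n$ and $u$ by the $(n,0)$-form $u\,dz_1\wedge\cdots\wedge dz_n$: the $\omega$-norm of the $(n,0)$-factor contributes $\det(\psi_{ij})^{-1}$, which cancels $dV_\omega=\det(\psi_{ij})\,dV$ and yields exactly $|u|_h^2\,dV$ and $\sum\psi^{ij}f_{i\lambda}\overline{f_{j\mu}}h_{\lambda\mu}\,dV$. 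Since Demailly's Theorem 4.5 is stated for $(n,q)$-forms anyway, this is the setting you must be in to apply it at all.

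A cleaner route, and the one the paper implicitly takes when it reruns this computation inside the proof of Theorem 3.1 (the chain of inequalities culminating in $\int_\Omega\langle[i\partial\bar\partial\tilde\psi,\Lambda]^{-1}\tilde f,\tilde f\rangle e^{-\tilde\psi}\,dV$), is to keep the Euclidean base metric throughout, absorb the weight into the bundle metric $he^{-\psi}$, and use that $A\geq A_0>0$ implies $\langle A^{-1}g,g\rangle\leq\langle A_0^{-1}g,g\rangle$ with $A_0=[i\partial\bar\partial\psi\otimes\mathrm{Id},\Lambda]$, whose inverse on $(n,1)$-forms is precisely contraction with $(\psi^{ij})$. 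This avoids changing the K\"ahler metric, hence also the completeness/exhaustion issues you flag, and the Nakano semi-positivity of $(E,h)$ enters only through $[i\Theta(E,h),\Lambda]\geq 0$ on $(n,1)$-forms. Your version with $\omega=i\partial\bar\partial\psi$ does work after the $(n,1)$-form correction, but it buys nothing here over the fixed-metric argument.
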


For the definition of curvature and Nakano positivity for Hermitian holomorphic vector bundles, see \cite{Dem}.
Recently, the converse of Lemma \ref{lem:L^2 estimate} was established.

\begin{lem}[{\cite[Theorem 1.1]{DNWZ19}}]\label{lem:Nakano cha}
If $(E,h)$ satisfies the optimal $L^2$-estimate condition,then $(E,h)$ is Nakano semi-positive.
\end{lem}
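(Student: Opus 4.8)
The plan is to prove the contrapositive by a localization-and-concentration argument: assuming $(E,h)$ is \emph{not} Nakano semi-positive at some point, I would manufacture a strictly plurisubharmonic weight and a compactly supported $\dbar$-closed test form for which the optimal $L^2$-estimate fails. First I would reduce to a pointwise statement. Fix $z_0\in\Omega$, which after a translation we take to be the origin, and normalize: since $E$ is trivial and $h$ is smooth, a constant linear change of the frame together with a holomorphic gauge linear in $z$ brings $h$ into normal form at $0$, namely $h_{\lambda\mu}(0)=\delta_{\lambda\mu}$ and $\partial_{z_i}h_{\lambda\mu}(0)=0$. The Taylor expansion then reads
\be
h_{\lambda\mu}(z)=\delta_{\lambda\mu}-\sum_{i,j}\Theta_{i\bar j\lambda\bar\mu}\,z_i\bar z_j+o(|z|^2),
\ee
where $(\Theta_{i\bar j\lambda\bar\mu})$ are the Chern curvature coefficients at $0$, so that Nakano semi-positivity at $0$ is precisely the inequality $\sum_{i,j,\lambda,\mu}\Theta_{i\bar j\lambda\bar\mu}\,a^{i\lambda}\overline{a^{j\mu}}\geq 0$ for every $a=(a^{i\lambda})\in\mc^n\otimes\mc^r$. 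Suppose, for contradiction, that this fails: there is a fixed $a$ with $\sum\Theta_{i\bar j\lambda\bar\mu}a^{i\lambda}\overline{a^{j\mu}}=-2\delta<0$.

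Next I would test the optimal $L^2$-estimate against data concentrating at $0$. Take the strictly plurisubharmonic weight $\psi_\tau(z)=\tau|z|^2$, so that $(\psi_\tau^{ij})=\tau^{-1}\delta^{ij}$, together with a test form $f_\tau$ supported in a small ball and built to point in the direction $a$ --- concretely a form of type $f_\tau=\dbar(\chi\cdot s_a)$ with $\chi$ a cutoff near $0$ and $s_a$ a smooth section whose $1$-jet at $0$ encodes $a$, so that $f_\tau$ is automatically $\dbar$-closed and compactly supported with finite right-hand side. The conceptual point is that the optimal right-hand side $\int\sum\psi_\tau^{ij}f_{i\lambda}\overline{f_{j\mu}}h_{\lambda\mu}e^{-\psi_\tau}$ is exactly the bound one would obtain from the H\"ormander--Demailly estimate if the bundle curvature $\Theta_h$ were set to zero, i.e.\ if positivity came solely from the Hessian of $\psi_\tau$. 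The true size of the minimal-norm solution $u$ of $\dbar u=f_\tau$, however, feels $\Theta_h$: after the rescaling $z=\tau^{-1/2}w$ the weight becomes the standard Gaussian weight on $\mc^n$ and $h$ becomes a $\tau^{-1}$-perturbation of the flat metric, reducing matters to the explicitly solvable $\dbar$-problem in the Bargmann--Fock model. Expanding $\int|u|_h^2e^{-\psi_\tau}$ to first order in $\tau^{-1}$ produces a correction proportional to $\sum\Theta_{i\bar j\lambda\bar\mu}a^{i\lambda}\overline{a^{j\mu}}$; by hypothesis this correction is negative, so for $\tau$ large the actual solution is strictly larger than the optimal bound permits, contradicting the optimal $L^2$-estimate condition. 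Hence no bad direction exists and $(E,h)$ is Nakano semi-positive.

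The main obstacle is making this concentration limit rigorous, and it splits into two technical points. First, one needs a genuine \emph{lower} bound on the minimal solution rather than a direct computation: this is cleanest via the duality $\|u\|=\sup_\beta|\langle f_\tau,\beta\rangle|/\|\dbar^*\beta\|$, where $\beta$ ranges over section-valued $(0,1)$-forms, choosing a near-holomorphic $\beta$ adapted to the Gaussian weight and to $a$. Showing that this supremum exceeds the optimal right-hand side by a definite amount is the crux, and it is exactly here that the negative curvature coefficient is extracted. Second, one must control all $o(|z|^2)$ remainders of $h$ uniformly as $\tau\to\infty$, so the leading-order model computation is not swamped by error terms; restricting the support of $f_\tau$ to a ball of radius $\sim\tau^{-1/2+\varepsilon}$ and invoking Gaussian moment bounds handles this, but the bookkeeping is the delicate part of the argument. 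Carrying out this analysis at every point of $\Omega$ yields Nakano semi-positivity of $(E,h)$ throughout.
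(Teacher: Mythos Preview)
The paper does not supply a proof of this lemma at all: it is quoted verbatim as \cite[Theorem 1.1]{DNWZ19} and used as a black box. So there is no ``paper's own proof'' to compare your attempt against; anything you write is already more than what the paper does.

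That said, your strategy is the right one and is essentially the argument carried out in \cite{DNWZ19}: assume a Nakano-negative direction $a$ at a point, pass to a normal frame, and violate the optimal estimate with a concentrated test datum. Two remarks on how your sketch differs from the source. First, in \cite{DNWZ19} the weight is not the bare $\tau|z|^2$; one perturbs $|z|^2$ by a quadratic term built from the bad direction $a$ (schematically $\psi=|z|^2-\mathrm{Re}\sum a^{i\lambda}\overline{a^{j\lambda}}z_i\bar z_j$ up to scaling), so that the curvature deficiency is already visible in $(\psi^{ij})$ rather than having to be extracted from a $\tau^{-1}$ expansion of the solution norm. This makes the contradiction a direct computation rather than an asymptotic one and sidesteps your ``main obstacle'' of a rigorous concentration limit. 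Second, the test form in \cite{DNWZ19} is taken essentially explicitly (a cutoff times a linear section in the $a$-direction), and both sides of the inequality are evaluated by hand; no appeal to the Bargmann--Fock model or to the duality formula for the minimal solution is needed. Your route via rescaling and duality would also work, but it is heavier than necessary, and the error control you flag as ``delicate'' is precisely what the perturbed-weight approach avoids.
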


\begin{defn}[{\cite[Definition 2]{Rau13}}]\label{def:nakano positive}
Let $g:\Omega\ra GL(r,\mc)$ be an Hermitian metric on the trivial complex vector bundle $E=\Omega\times\mc^r$ over an open set $\Omega\subset\mr^n$.
Let $$\Theta^g_{jk}=-\frac{\partial}{\partial x_k}\left(g^{-1}\frac{\partial g}{\partial x_j}\right),\ 1\leq j,k\leq n,$$
where differentiation should be interpreted elementwise.
We say that $(E,g)$ is Nakano semi-positive if for any $n$-tuple of vectors $\{u_j\}^r_{j=1}\subset\mc^r$
$$\sum^n_{j,k=1}(\Theta^g_{jk}u_j, u_k)_g\geq 0.$$
\end{defn}

\begin{rem}\label{rem:real and complex Nak pos}
Let $U\subset\mr^n$ be a connected open set and let $\Omega=U+i\mr^n\subset \mc^n$ be the tube domain with base $U$.
Let $h(z):U\ra GL(r,\mc)$ be an Hermitian metric on the trivial holomorphic vector bundle $E=\Omega\times\mc^r$.
Assume that $h(z)$ is independent of the imaginary part of $z$.
By definition, one can see that $(E,h)$ is Nakano semipositive as an Hermitian holomorphic vector bundle 
if and only if $(E|_U=U\times\mc^r, h|_U)$ is Nakano semipositive in the sense of Definition \ref{def:nakano positive}.
\end{rem}

\section{Positivity of invariant direct images with compact group actions}
The aim of this section is to prove Theorem \ref{thm:intr-minimum compact action}.
For convenience, we restate it here.
\begin{thm}[=Theorem \ref{thm:intr-minimum compact action}]\label{thm:minimum compact action}
Let $\Omega \subseteq \mc_{t}^{n} \times \mc_{z}^{m}$ be a pseudoconvex domain,
such that $\Omega_t$ are connected for all $t$ in $D:=p(\Omega)$.
Let $\tilde E=\Omega\times\mc^r$ be the trivial holomorphic vector bundle of rank $r$ over $\Omega$,
and let $\tilde h(t,z)$ be an Hermitian metric on $\tilde E$ which is viewed as a smooth map from $\Omega$ to the space of positive Hermitian matrices.
Let $E=D\times \mc^r$ be the trivial bundle over $D$ with the hermitian metric given by
$$h(t)=\int_{{p}^{-1}(t)}\tilde h(t,z)dV_z.$$
Assume that there is a compact Lie group $K$ acting holomorphically on $\Omega$ by acting on the second variable $z$, such that
\begin{itemize}
\item[(i)] $\tilde{h}(t, z) dV_{z}$ is $K$-invariant for $t\in D$, and
\item[(ii)] all $K$-invariant holomorphic functions on $\Omega_t$ are constant, $t\in D$.
\end{itemize}
If $(\tilde E,\tilde h)$ is Nakano semi-positive and $h$ is $C^2$, then $(E, h)$ is Nakano semi-positive.
\end{thm}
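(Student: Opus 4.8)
The plan is to verify that $(E,h)$ satisfies the optimal $L^2$-estimate condition of Definition~\ref{def:optimal L2 estimate cond} and then invoke Lemma~\ref{lem:Nakano cha}. Since Nakano semi-positivity is a pointwise statement, it is enough to fix $t_0\in D$, a ball $B\Subset D$ with $t_0\in B$, and to check the condition for $(E|_B,h|_B)$; here $\Omega_B:=p^{-1}(B)=\Omega\cap(B\times\mc^m)$ is again pseudoconvex and $K$-invariant. Fix a smooth strictly plurisubharmonic $\psi$ on $B$ and a $\bar\partial$-closed $f\in C_c^\infty(B,\wedge^{0,1}T^*B\otimes E)$. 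By Fubini, $\int_B|s|_h^2 e^{-\psi}\,dV_t=\int_{\Omega_B}|p^*s|_{\tilde h}^2 e^{-p^*\psi}\,dV$ for every section $s$ of $E$ over $B$, where $p^*s(t,z)=s(t)$; and by hypothesis~(ii), a $K$-invariant holomorphic section of $\tilde E$ over $\Omega_B$ is constant along each (connected) fibre $\Omega_t$, hence equals $p^*s$ for a holomorphic section $s$ on $B$. Therefore it suffices to produce a $K$-invariant $\tilde u$ on $\Omega_B$ with $\bar\partial\tilde u=p^*f$ and $\int_{\Omega_B}|\tilde u|_{\tilde h}^2 e^{-p^*\psi}\,dV\le\int_B\sum_{i,j,\lambda,\mu}\psi^{ij}f_{i\lambda}\overline{f_{j\mu}}h_{\lambda\mu}e^{-\psi}\,dV_t=:A$, for then $\tilde u$ descends to the required solution $u$ on $B$.

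To construct $\tilde u$: as $p^*\psi$ is only plurisubharmonic, fix a $K$-invariant smooth strictly plurisubharmonic exhaustion $\varphi$ of $\Omega_B$ (obtained by averaging such an exhaustion of $\Omega_B$ over the Haar measure of $K$), say with $\varphi\ge -C_0$, and set $\tilde\psi_\epsilon:=p^*\psi+\epsilon\varphi$, which is strictly plurisubharmonic and $K$-invariant. The H\"ormander--Demailly $L^2$-estimate for $\bar\partial$ on the pseudoconvex $\Omega_B$ (cf. Lemma~\ref{lem:L^2 estimate} and \cite{Dem}), applied to the Nakano semi-positive $(\tilde E,\tilde h)$ with weight $\tilde\psi_\epsilon$, gives $v_\epsilon$ with $\bar\partial v_\epsilon=p^*f$ and the corresponding optimal bound. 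Averaging, $u_\epsilon:=\int_K k^*v_\epsilon\,dk$ is $K$-invariant, still solves $\bar\partial u_\epsilon=p^*f$ (since $p^*f$ is $K$-invariant, because $p\circ k=p$), and $\|u_\epsilon\|\le\|v_\epsilon\|$ in $L^2(\Omega_B,\tilde h e^{-\tilde\psi_\epsilon}\,dV)$ because each $k^*$ is an isometry of that space --- this is exactly where hypothesis~(i), the $K$-invariance of $\tilde h\,dV_z$, is used. Since $p^*f$ has components only along $d\bar t_1,\dots,d\bar t_n$, a Schur-complement computation for the block Hessian of $\tilde\psi_\epsilon$ gives $\sum_{i,j}(\tilde\psi_\epsilon)^{ij}(p^*f)_{i\lambda}\overline{(p^*f)_{j\mu}}\le\sum_{i,j}\psi^{ij}f_{i\lambda}\overline{f_{j\mu}}$ pointwise; with $e^{-\epsilon\varphi}\le e^{\epsilon C_0}$ and Fubini this bounds $\int_{\Omega_B}|u_\epsilon|_{\tilde h}^2 e^{-\tilde\psi_\epsilon}\,dV$ by $e^{\epsilon C_0}A$. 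Letting $\epsilon\to 0$, a weak $L^2_{\mathrm{loc}}$ limit $\tilde u$ of the $u_\epsilon$ is $K$-invariant, solves $\bar\partial\tilde u=p^*f$, and satisfies $\int_{\Omega_B}|\tilde u|_{\tilde h}^2 e^{-p^*\psi}\,dV\le A$ by weak lower semicontinuity on relatively compact subsets (the factor $e^{-\epsilon\varphi}$ causing no loss in the limit). Finally, $\bar\partial\tilde u=p^*f$ has no $d\bar z$-component, so $\tilde u$ is holomorphic along the fibres; by elliptic regularity it is smooth, and being $K$-invariant it is constant on each connected $\Omega_t$, hence equals $p^*u$ for a smooth $u$ on $B$ with $\bar\partial u=f$ and $\int_B|u|_h^2 e^{-\psi}\,dV_t\le A$. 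Lemma~\ref{lem:Nakano cha} then gives Nakano semi-positivity of $(E|_B,h|_B)$, and letting $t_0$ vary finishes the proof.

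The main obstacle is this round trip between $B$ and $\Omega_B$: one must simultaneously force the $\bar\partial$-solution to be $K$-invariant --- so that hypotheses (i)--(ii) identify it with a pullback --- keep the $L^2$ estimate sharp despite the degeneracy of the weight $p^*\psi$ (this is the role of the $\epsilon\varphi$-perturbation together with the Schur-complement inequality, which guarantees no loss of constant as $\epsilon\to0$), and then carry $K$-invariance, the equation, and the sharp bound through the weak limit and the regularity step. The auxiliary points --- the existence of the invariant strictly plurisubharmonic exhaustion, the isometry property of $k^*$, and the fibrewise-constancy descent --- are routine once this framework is in place.
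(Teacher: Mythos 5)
Your proposal is correct and follows the same overall skeleton as the paper: reduce to the optimal $L^2$-estimate condition via Lemma \ref{lem:Nakano cha}, pull $\psi$ and $f$ back to $\Omega$, solve $\bar\partial$ upstairs with the H\"ormander--Demailly estimate, force the solution to be $K$-invariant so that hypothesis (ii) and fibrewise holomorphy make it a pullback, and descend by Fubini. The one genuine divergence is the mechanism for producing the $K$-invariant solution: the paper takes the \emph{minimal} $L^2$ solution $\tilde u$, observes that $\tilde u_g(t,z)=\tilde u(t,gz)$ solves the same equation with the same norm (this is where (i) enters), and concludes $\tilde u_g=\tilde u$ from uniqueness of the minimizer; you instead average an arbitrary solution over the Haar measure of $K$, using that each $k^*$ is an isometry of the weighted $L^2$ space (again (i)) so the norm does not increase. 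Both are valid; the averaging route is slightly more robust in that it does not rely on existence and uniqueness of a norm-minimizing solution, while the paper's route is shorter. Your treatment is also more careful on two technical points the paper passes over silently: the weight $p^*\psi$ is only degenerate-plurisubharmonic on $\Omega$, which you handle with the $\epsilon\varphi$-perturbation plus the Schur-complement inequality (correct: positivity of the full Hessian of $\varphi$ makes the $t$-block Schur complement nonnegative, so the relevant block of the inverse only decreases), whereas the paper invokes the estimate with $[i\partial\bar\partial\tilde\psi,\Lambda]^{-1}$ directly in Demailly's generalized form; and you localize to a ball $B\Subset D$, which squares better with the boundedness required in Definition \ref{def:optimal L2 estimate cond}. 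The price you pay is the extra limiting argument as $\epsilon\to 0$, which you handle correctly via weak compactness on relatively compact subsets.
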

\begin{proof}
By Lemma \ref{lem:Nakano cha}, it suffices to prove that $(E, h)$ satisfies the optimal $L^2$ estimate condition.

Assume that $(e_{1},...,e_{r})$ is the canonical holomorphic frame of $E$,
and $(\tilde{e}_{1},...,\tilde{e}_{r})$ is the canonical holomorphic frame of $\tilde{E}$.
Let $\psi$ be a smooth strictly plurisubharmonic function on $D$.
Setting $\tilde{\psi}(t, z)=\psi(t), \ (t,z)\in \Omega$, we get a smooth plurisubharmonic function $\tilde\psi$ on $\Omega$.

Let $f \in C^{\infty}\left(D, \wedge^{0, 1} \mathrm{T}^{*}D \otimes \mathrm{E}\right)$
with compact support and $\bar\partial f=0$.
We can write $f$ as
$$f=\sum_{\lambda=1}^r w_{\lambda} \otimes e_{\lambda}=\sum_{\lambda=1}^r\left(\sum_{j=1}^{n} f_{j \lambda} d \bar{t}_{j}\right) \otimes e_{\lambda},$$
where $f_{j\lambda}$ are smooth functions on $D$.
Let $\tilde w_\lambda=p^*(w_\lambda)$, $\tilde f_{j\lambda}=p^*(f_{j\lambda}):=f_{j\lambda}\circ p$,
and let
$$\tilde{f}=\sum_{\lambda=1}^r \tilde w_{\lambda} \otimes \tilde{e}_{\lambda}=\sum_{\lambda=1}^r\left(\sum_{j=1}^{n} \tilde f_{j \lambda} d \bar{t}_{j}\right) \otimes \tilde{e}_{\lambda} \in C^{\infty}\left(\Omega, \wedge^{0, 1} \mathrm{T}^{*}\Omega \otimes \tilde{E}\right),$$
then $\bar\partial\tilde f=0$.

Let $\Theta_{\tilde E,\tilde h}$ be the curvature operator of $(\tilde E, \tilde h)$.
Then $\Theta+\partial\bar\partial\tilde\psi$ is the curvature operator of $\tilde E$ with the Hermitian metric $e^{-\tilde\psi}\cdot\tilde h$.
Since $(\tilde E,\tilde h)$ is Nakano semi-positive, by Lemma \ref{lem:L^2 estimate},
there exists $\tilde u\in L^2(\Omega,\tilde E)$ such that
\begin{equation}\label{eqn:dbar total space}
\bar\partial\tilde u=\tilde f,
\end{equation}
and
\begin{equation}\label{eqn:L2 direct image}
\begin{split}
\int_\Omega |\tilde u|^2_{\tilde h}e^{-\tilde\psi}dV_{(t,z)}
&\leq \int_{\Omega}\langle[i\Theta_{\tilde E,\tilde h}+i\partial\bar\partial\tilde\psi, \Lambda]^{-1}\tilde f, \tilde f\rangle e^{-\tilde\psi}dV_{(t,z)}\\
&\leq \int_{\Omega}\langle[i\partial\bar\partial\tilde\psi, \Lambda]^{-1}\tilde f, \tilde f\rangle e^{-\psi}dV_{(t,z)}\\
&= \int_{\Omega}\sum_{i, j=1}^n\sum_{\lambda,\mu=1}^r \tilde{\psi}^{\text {ij}} \tilde f_{i \lambda} \bar{\tilde f}_{j \mu} \tilde{h}_{\lambda \mu}e^{-\tilde{\psi}}dV_{(t,z)},\\
\end{split}
\end{equation}
where $\Lambda$ is the adjoint of the operator given by the wedge product of the flat K\"ahler form on $\Omega$,
and $(\tilde\psi^{ij})_{n\times n}$ is the inverse of the matrix $(\frac{\partial ^2\tilde \psi}{\partial t_i\partial\bar t_j})_{n\times n}$.

We assume that $\tilde u$ is minimal, in the sense that
the left hand side in the top line in \eqref{eqn:L2 direct image} is minimal.
We write $\tilde u$ as
$$\tilde u=\sum^r_{\lambda=1}\tilde u_\lambda \tilde e_\lambda,$$
where $\tilde u_\lambda$ are functions on $\Omega$.
From equation \eqref{eqn:dbar total space}, we have $\bar\partial\tilde u_\lambda=\tilde w_\lambda$,
which means that $\bar\partial_z \tilde u_\lambda=0$ and hence $\tilde u_\lambda$ are holomorphic with $z_1,\cdots, z_m$.

For any $g\in K$, let
$$\tilde u_g=\sum^r_{\lambda=1}\tilde u_\lambda(t,gz) \tilde e_\lambda,$$
then it is clear that $\tilde u_g$ also satisfies the equation $\bar\partial\tilde u_g=\tilde f$.
By assumption, we also have
$$\int_\Omega |\tilde u_g|^2_{\tilde h}e^{-\tilde\psi}dV_{(t,z)}=\int_\Omega |\tilde u|^2_{\tilde h}e^{-\tilde\psi}dV_{(t,z)}.$$
By the uniqueness of the minimal solution, we have $\tilde u_g=\tilde u$ and hence $\tilde u_\lambda(t,gz)=\tilde u_\lambda(t,z)$ for all $g\in K$.
By assumption, $\tilde u_\lambda(t,z)$ must be independent of $z$.
So we can view $\tilde u_\lambda(t,z)$ as a function on $D$, denoted by $u_\lambda(t)$.

Let $u=\sum_{\lambda=1}^r u_{\lambda} e_{\lambda} \in L^{2}\left(D, E\right)$,
then it is clear that $\bar{\partial} u=f$.
By Fubini's theorem, we get
$$\int_{\Omega}|\tilde{u}|_{\tilde{h}}^{2} e^{-\tilde{\psi}} dV_{(t,z)}=\int_{\Omega}\tilde u_{\lambda} \bar{\tilde u}_{\mu} \tilde{h}_{\lambda \mu} e^{-\tilde{\psi}} d V_{(t,z)}=\int_{D}u_{\lambda} \bar{u}_{\mu} h_{\lambda \mu} e^{-\psi} dV_{t}=\int_{D}|u|_{h}^{2} e^{-\psi} d V_{t},$$
$$\int_{\Omega} \sum_{i, j=1}^n\sum_{\lambda,\mu=1}^r \tilde{\psi}^{ij}\tilde f_{i \lambda} \bar{\tilde f}_{j \mu}\tilde{h}_{\lambda \mu} e^{-\tilde{\psi}}d V_{(t,z)}=\int_{D} \sum_{i, j=1}^n\sum_{\lambda,\mu=1}^r \psi^{ij} f_{i \lambda} \bar f_{j \mu}h_{\lambda \mu} e^{-\psi}d V_{t}.$$
Combing the above identities with estimate \eqref{eqn:L2 direct image}, we get
$$\int_{D}|u|_{h}^{2} e^{-\psi} d V_{t} \leqslant \int_{D} \sum_{i, j=1}^n\sum_{\lambda,\mu=1}^r \psi^{ij} f_{i \lambda} \bar f_{j \mu}h_{\lambda \mu} e^{-\psi}d V_{t},$$
which implies that $(E,h)$ satisfies the optimal $L^2$ condition.
By Lemma \ref{lem:Nakano cha},  $(E,h)$ is Nakano semi-positive.
\end{proof}

\section{The case of tube domains}\label{sec:tube domain case}
We give the proof of Theorem \ref{thm:intro-minimum noncompact action} and Theorem \ref{thm:intro real convex case}
\begin{thm}[=Theorem \ref{thm:intro-minimum noncompact action}]\label{thm:minimum noncompact action}
Let $\Omega \subseteq \mc_{t}^{n} \times \mc_{z}^{m}$ be a pseudoconvex domain,
such that $\Omega_t=U_t\times i\mr^m$ are (connected) tube domains for all $t$ in $D:=p(\Omega)$.
Let $\tilde E=\Omega \times\mc^r\ra\Omega$
be the trivial holomorphic vector bundle of rank $r$ on $\Omega$.
Let $\tilde h(t,z)$ be an Hermitian metric on $\tilde E$, which is independent of the imaginary part $Im z$ of $z$.
Let $h(t)$ be the Hermitian metric on the trivial vector bundle $E=D \times\mc^r\ra D$ over $D$,
given by
$$h(t):=\int_{U_t}\tilde h(t,Rez)d V_{Rez}.$$
If ($\tilde E, \tilde h$) is Nakano semi-positive and $h$ is smooth,
then ($E, h$) is Nakano semi-positive.
\end{thm}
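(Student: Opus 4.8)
The plan is to reduce Theorem~\ref{thm:minimum noncompact action} to Theorem~\ref{thm:minimum compact action} by replacing the noncompact group $\mr^m$ acting on the tube direction with the compact torus $K=(S^1)^m$, via the covering map $\pi:\mc^m_z\ra(\mc^*)^m_w$ given by $w_j=e^{iz_j}$ (equivalently $z_j\mapsto w_j=e^{2\pi i z_j}$; the exact normalization is immaterial). First I would observe that since $\tilde h(t,z)$ depends only on $\mathrm{Re}\,z$, and under the substitution $w_j=e^{iz_j}$ one has $|w_j|=e^{-\mathrm{Im}\,z_j}$ while $\mathrm{Re}\,z_j$ is exactly $-\log|w_j|$-free, i.e. the variable $\mathrm{Re}\,z_j$ corresponds to $\arg w_j$ up to a covering. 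So I would set up the correspondence so that dependence of $\tilde h$ on $\mathrm{Re}\,z_j$ becomes dependence on $\arg w_j$ only, making $\tilde h\, dV$ invariant under the $(S^1)^m$-action $w\mapsto e^{i\theta}w:=(e^{i\theta_1}w_1,\dots,e^{i\theta_m}w_m)$ after the natural change of the fiber domain $U_t\times i\mr^m$ into a Reinhardt-type domain in $w$-space. Care is needed because $\mathrm{Re}\,z_j$ ranges over all of $\mr$ but $\arg w_j$ is periodic; the point is that $U_t$ being a \emph{convex} (hence, via the Berndtsson setup, the relevant) base means the periodicity does not cause trouble — actually one must check that the image domain $\Omega'$ in $\mc^n_t\times(\mc^*)^m_w$ is pseudoconvex and that its fibers are connected; pseudoconvexity is preserved because $\pi$ is a holomorphic covering and $\Omega$ is pseudoconvex, and connectedness of the $w$-fibers follows from connectedness of $U_t$ together with the exponential map being surjective onto each circle.

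Next I would transport the Hermitian bundle: let $\tilde E'=\Omega'\times\mc^r$ with metric $\tilde h'(t,w)$ defined by pulling back $\tilde h$ under $\pi$, i.e. $\pi^*\tilde h'=\tilde h$. Since $\pi$ is a local biholomorphism in the $z,w$ variables (and the identity in $t$), Nakano semi-positivity of $(\tilde E,\tilde h)$ is equivalent to Nakano semi-positivity of $(\tilde E',\tilde h')$ — curvature is a local invariant and $\pi$ is locally biholomorphic. Then the integral defining $h$, namely $h(t)=\int_{U_t}\tilde h(t,\mathrm{Re}\,z)\,dV_{\mathrm{Re}\,z}$, must be matched with the integral $h'(t)=\int_{(\Omega')_t}\tilde h'(t,w)\,dV_w$ appearing in Theorem~\ref{thm:minimum compact action}. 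This is a change-of-variables computation: under $w_j=e^{iz_j}$, writing $z_j=x_j+iy_j$, the fiber integral over the Reinhardt domain in $w$ decomposes (by Fubini and the $S^1^m$-symmetry) into an angular integral times a radial integral; after accounting for the Jacobian of $\pi$ and the fact that the angular variable is $x=\mathrm{Re}\,z$, one finds $h'(t)$ equals $h(t)$ up to a constant factor independent of $t$ (coming from the $2\pi$ periods and the radial $dV$), which does not affect Nakano semi-positivity. The hypothesis that $h$ is $C^2$ transfers to $h'$ being $C^2$ since the two differ by a constant.

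With these identifications in place I would then verify the two structural hypotheses of Theorem~\ref{thm:minimum compact action} for the data $(\Omega',\tilde E',\tilde h',K=(S^1)^m)$: hypothesis (i), that $\tilde h'(t,w)\,dV_w$ is $K$-invariant, holds by construction since $\tilde h'$ depends only on $|w_1|,\dots,|w_m|$ (the radial variables, which encode $\mathrm{Im}\,z$ — wait, rather $\mathrm{Re}\,z$ depending on the normalization) and $dV_w$ is rotation-invariant; hypothesis (ii), that every $K$-invariant holomorphic function on the fiber $(\Omega')_t$ is constant, holds because a holomorphic function on a connected Reinhardt-type domain in $(\mc^*)^m$ invariant under the full torus rotation has a Laurent expansion with only the constant term surviving the invariance, hence is constant (here connectedness of $(\Omega')_t$, established above, is used). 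Theorem~\ref{thm:minimum compact action} then yields that $(E',h')=(E,h)$ (up to the harmless constant) is Nakano semi-positive, finishing the proof. The main obstacle I anticipate is purely bookkeeping: correctly matching the roles of $\mathrm{Re}\,z$ versus $\mathrm{Im}\,z$ with $\arg w$ versus $|w|$ under the covering, ensuring the image domain $\Omega'$ genuinely has connected Reinhardt fibers over $D$ (rather than, say, a proper sub-annulus that fails to be invariantly all of the fiber), and checking the change-of-variables constant is $t$-independent — none of this is deep, but it is where an error would most easily hide, and I would want to write the substitution $w_j=e^{iz_j}$ out explicitly, noting that $\tilde h$'s independence of $\mathrm{Im}\,z$ is exactly what makes $\tilde h'$ well-defined (single-valued) on $(\mc^*)^m$ despite $\pi$ being infinite-to-one.
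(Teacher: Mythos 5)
Your overall strategy --- compactifying the translation action on the tube fibers via an exponential covering onto $(\mc^*)^m$ and then invoking Theorem \ref{thm:minimum compact action} with $K=(S^1)^m$ --- is exactly the paper's strategy. But two of the points you dismiss as ``bookkeeping'' are precisely where the argument breaks, and as written it does not go through. First, the normalization of the covering map is not immaterial; it is the crux. You take $w_j=e^{iz_j}$, so that $\arg w_j \equiv \mathrm{Re}\,z_j \pmod{2\pi}$ and $|w_j|=e^{-\mathrm{Im}\,z_j}$. With this choice the direction in which $\tilde h$ genuinely varies ($\mathrm{Re}\,z$, ranging over the non-periodic base $U_t$) is sent to the periodic angular variable, so $\tilde h'$ is not single-valued on the image unless $\tilde h$ happens to be $2\pi$-periodic in $\mathrm{Re}\,z$: independence of $\mathrm{Im}\,z$ does not help, because the deck transformations of your covering are the \emph{real} translations $z\mapsto z+2\pi e_j$. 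Worse, the group you actually need to compactify --- the translations by $i\mr^m$, under which $\tilde h$ is invariant and over whose orbit directions you are \emph{not} integrating --- becomes the radial scalings $w\mapsto \lambda w$, $\lambda>0$, which is still noncompact; hypothesis (i) of Theorem \ref{thm:minimum compact action} then fails, since $\tilde h'\,dV_w$ would depend on $\arg w$ and be invariant only radially. The correct map is $w_j=e^{z_j}$: then $\mathrm{Re}\,z_j=\ln|w_j|$ is single-valued, $\tilde h'(t,w)=\tilde h(t,\ln|w|)$ is well defined and torus-invariant, and the $i\mr^m$-translations become the $(S^1)^m$-rotations.

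Second, the change of variables does not produce a $t$-independent constant. The real Jacobian of $z_j\mapsto e^{z_j}$ is $|w_j|^2$, so $h(t)=\int_{U_t}\tilde h\,dV_x$ equals $(2\pi)^{-m}\int_{\Omega^*_t}\prod_j|w_j|^{-2}\,\tilde h'(t,w)\,dV_w$, and the factor $\prod_j|w_j|^{-2}$ is a nonconstant function of $w$. To apply Theorem \ref{thm:minimum compact action} you must therefore replace $\tilde h'$ by the modified metric $\tilde h''=c\,\prod_j|w_j|^{-2}\,\tilde h'$ and check that it is still Nakano semi-positive. This is a genuine (if short) step, which the paper carries out: it holds because $\ln|w_j|$ is pluriharmonic on $\mc^*$, so $\partial\bar\partial\ln|w_j|=0$ and the curvature of $\tilde h''$ equals that of $\tilde h'$. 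Your proposal skips this by asserting the discrepancy is a harmless constant. Once both points are repaired, your argument coincides with the paper's proof.
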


\begin{proof}
Let us consider the following map:
\begin{equation}
\begin{split}
f: \Omega &\rightarrow \Omega^{*}_{(t,w)}\\
(t_1,\cdots,t_n, z_1,\cdots, z_m)& \mapsto (t_1,\cdots, t_n, e^{z_1}, \cdots, e^{z_m}),
\end{split}
\end{equation}
where $\Omega^{*}=f(\Omega)$.
Since $\tilde h(t,z)$ is independent of the imaginary part of $z$,
it induces a metric $\tilde h'(t,w): \Omega^{*} \ra GL(r,\mc)$
on the trivial bundle
$$\tilde E':=\Omega^{*} \times \mc^r\ra \Omega^{*},$$
which is given by
$$\tilde h'(t,w)=\tilde h(t,\ln|w|).$$
Then we have
\beq
\begin{split}
h(t)
&=\int_{U_t} \tilde h(t,x)d V_x\\
&=\frac{1}{(2 \pi)^{m}} \int_{\Omega^*_t} \frac{1}{|w_{1}|}\cdots \frac{1}{|w_{m}|} \tilde h^{'}(t, w) d V_w\\
&=\int_{\Omega^*_t} \tilde h''(t,w)dV_w.
\end{split}
\eeq
where $h''(t,w):=\frac{1}{(2 \pi)^{m}}\frac{1}{|w_{1}|}\cdots \frac{1}{|w_{m}|} \tilde h^{'}(t, w)$ can be viewed as a Hermitian metric on $\tilde E'$.

The curvature of $(\tilde E', \tilde h'')$ is given by
$$\Theta_{\tilde E',\tilde h''}=\Theta_{\tilde E',\tilde h'}-\sum^m_{j=1}\partial\bar\partial(\ln|w_j|)-\partial\bar\partial\ln(2\pi)^m=\Theta_{\tilde E',\tilde h'}.$$
So $(\tilde E', \tilde h'')$ is also Nakano semi-positive.

Considering the compact Lie group $K:=(S^1)^m$ which acts on $p^{-1}(t)$ as
$$(\alpha_1,\cdots, \alpha_m)(t,w_1,\cdots, w_m)=(t, \alpha_1w_1, \cdots, \alpha_mw_m),$$
and applying Theorem \ref{thm:minimum compact action}, we see that $(E, h)$ is Nakano semi-positive.
\end{proof}

\begin{thm}[=Theorem \ref{thm:intro real convex case}]
Let $\Omega_0 \subseteq \mr_{t}^{n} \times \mr_{x}^{m}$ be a convex domain,
let $p_0:\Omega_0 \rightarrow \mr_{t}^{n}$ be the natural projection,
and let $\Omega_{0,t}=p_0^{-1}(t)$ for $t\in D_0:=p_0(\Omega_0)$.
Let $\tilde g(t,x):\Omega_0 \rightarrow GL(r,\mc)$ be an Hermitian metric on the trivial vector bundle
$\tilde E_0=\Omega_0 \times\mc^r\ra \Omega_0$.
Let $g(t):D_0 \ra GL(r,\mc)$ be the Hermitian metric on the trivial vector bundle $E_0=D_0 \times\mc^r\ra D_0$ over $D_0$
given by
$$g(t):=\int_{\Omega_{0,t}}\tilde g(t,x)dV_x.$$
If ($\tilde E_0, \tilde g$) is Nakano semi-positive and $g$ is $C^2$,
then ($E_0, g$) is Nakano semi-positive.
\end{thm}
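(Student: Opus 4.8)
The plan is to deduce this statement from Theorem~\ref{thm:minimum noncompact action} by complexifying every real variable and then using Remark~\ref{rem:real and complex Nak pos} to translate between the real notion of Nakano semi-positivity (Definition~\ref{def:nakano positive}) and the notion for Hermitian holomorphic vector bundles.

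First I would set $\Omega := \Omega_0 + i\mr^{n+m} = \{(t,z)\in\mc^n_t\times\mc^m_z : (\operatorname{Re}t,\operatorname{Re}z)\in\Omega_0\}$. Regarding $\mc^{n+m}$ as $\mr^{2(n+m)}$, the set $\Omega$ is convex (taking real parts is $\mr$-linear, so a real convex combination of points of $\Omega$ again lies in $\Omega$), hence $\Omega$ is a connected pseudoconvex domain; this is the point where convexity of $\Omega_0$ is used. Its projection is $D := p(\Omega) = D_0 + i\mr^n$, and for $t\in D$ the fiber is $\Omega_t = \Omega_{0,\operatorname{Re}t} + i\mr^m = U_t\times i\mr^m$ with $U_t := \Omega_{0,\operatorname{Re}t}$ a nonempty slice of a convex set, hence connected, so the $\Omega_t$ are connected tube domains. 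Next I would define $\tilde h(t,z):=\tilde g(\operatorname{Re}t,\operatorname{Re}z)$, a Hermitian metric on $\tilde E:=\Omega\times\mc^r$ independent of $\operatorname{Im}z$ (and of $\operatorname{Im}t$). By Remark~\ref{rem:real and complex Nak pos}, applied with base $\Omega_0\subset\mr^{n+m}$ and tube $\Omega$, the bundle $(\tilde E,\tilde h)$ is Nakano semi-positive as a Hermitian holomorphic vector bundle if and only if $(\tilde E_0,\tilde g)$ is Nakano semi-positive in the sense of Definition~\ref{def:nakano positive}; our hypothesis therefore gives Nakano semi-positivity of $(\tilde E,\tilde h)$.

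Then I would compute the direct-image metric: for $t\in D$,
$$h(t)=\int_{U_t}\tilde h(t,\operatorname{Re}z)\,dV_{\operatorname{Re}z}=\int_{\Omega_{0,\operatorname{Re}t}}\tilde g(\operatorname{Re}t,x)\,dV_x=g(\operatorname{Re}t),$$
so $h$ on $E:=D\times\mc^r$ is exactly the complexification of $g$, and in particular $h$ is $C^2$ because $g$ is. Applying Theorem~\ref{thm:minimum noncompact action} to $(\Omega,\tilde E,\tilde h)$ yields that $(E,h)$ is Nakano semi-positive as a Hermitian holomorphic vector bundle over the tube domain $D=D_0+i\mr^n$. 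Since $h$ is independent of $\operatorname{Im}t$, a second application of Remark~\ref{rem:real and complex Nak pos}, now with base $D_0\subset\mr^n$, shows that $(E_0,g)$ is Nakano semi-positive in the sense of Definition~\ref{def:nakano positive}, which is the desired conclusion.

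As for the main obstacle: there really is none of an analytic nature — the whole content has been packaged into Theorem~\ref{thm:minimum noncompact action} and Remark~\ref{rem:real and complex Nak pos}. The only things to watch are that the complexified domain $\Omega$ is pseudoconvex (immediate here, since it is even convex, which is precisely why convexity of $\Omega_0$ is the right hypothesis) and that the two invocations of Remark~\ref{rem:real and complex Nak pos} are carried out with the correct splitting of the variables, so that the identity $h(t)=g(\operatorname{Re}t)$ genuinely matches the direct-image metric appearing in Theorem~\ref{thm:minimum noncompact action}.
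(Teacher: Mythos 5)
Your proposal is correct and follows essentially the same route as the paper: complexify $\Omega_0$ to the tube $\Omega=\Omega_0+i\mr^{n+m}$, extend $\tilde g$ to a metric $\tilde h$ independent of the imaginary parts, apply Theorem \ref{thm:minimum noncompact action}, and translate back and forth between the real and complex notions of Nakano semi-positivity via Remark \ref{rem:real and complex Nak pos}. Your write-up is in fact slightly more careful than the paper's, spelling out the convexity/pseudoconvexity of $\Omega$, the connectedness of the fibers, and the identity $h(t)=g(\operatorname{Re}t)$.
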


\begin{proof}
Let $\Omega=\Omega_0 +i\mr^{n+m}\subset \mc^n\times\mc^m$.
We extend $\tilde g$ to an Hermitian metric $\tilde h$ on the trivial bundle $\tilde E:=\Omega\times\mc^r$ such that $\tilde h(t,z)$
is independent of the imaginary part of $t, z$ and $\tilde h|_{\Omega_0}=\tilde g$.
Let $p:\mc^n\times\mc^m\ra\mc^n$ be the natural projection and let $D=p(\Omega)\subset\mc^n$ and $\Omega_t=p^{-1}(t)=U_t\times i\mr^m$.
We define an Hermitian metric on the trivial bundle $E=D\times\mc^r$ by setting
$$h(t)=\int_{U_t}\tilde h(t,Rez)dV_{Rez}.$$
By Theorem \ref{thm:minimum noncompact action}, $(E,h)$ is Nakano semi-positive.
It is clear that $h(t)$ is independent of the imaginary part of $t$ and $h|_{D_0}=g$,
thus $(E_0,g)$ is Nakano semi-positive (see Remark \ref{rem:real and complex Nak pos}).
\end{proof}

%

\bibliographystyle{amsplain}

\end{document}